\documentclass[10pt,a4paper]{amsart}

\usepackage[latin1]{inputenc}
\usepackage{amsmath}
\usepackage{amsfonts}
\usepackage{amssymb}
\usepackage{amsthm}
\usepackage{enumerate}
\usepackage{mathrsfs}
\usepackage{cite}
\usepackage{verbatim}
\usepackage{a4wide}
\usepackage{dsfont}
\newcommand{\bbb}[1][\mathds{1}]{\mathds{#1}}
\newcommand{\mbold}[1]{\mathbb{#1}}

\newtheoremstyle{primedtheorem}{}{}{\itshape}{}{\scshape}{}{3pt}{#2 {#1}'.}
\newtheoremstyle{swappedplain}{}{}{\itshape}{}{\scshape}{}{3pt}{#2 #3 #1.}
\newtheoremstyle{swappedroman}{}{}{\rmfamily}{}{\scshape}{}{3pt}{#2 #3 #1.}
\newtheoremstyle{swappedromancontents}{}{}{\rmfamily}{}{\scshape}{}{3pt}{#2 #3 #1. \addcontentsline{toc}{subsubsection}{#2 #3}}
\newtheoremstyle{questionstyle}{}{}{\rmfamily}{}{\scshape}{}{3pt}{* #1 #2 *}

\newtheorem{theorem}{Theorem}
\newtheorem*{theorem*}{Theorem}
\newtheorem{lemma}[theorem]{Lemma}
\newtheorem*{lemma*}{Lemma}
\newtheorem{proposition}[theorem]{Proposition}
\newtheorem*{proposition*}{Proposition}

\newtheorem{corollary}[theorem]{Corollary}
\newtheorem*{corollary*}{Corollary}

\newtheorem*{definition*}{Definition}

\newtheorem*{notation*}{Notation}

\newtheorem*{remark*}{Remark}

\newtheorem*{example*}{Example}

\newtheorem*{observation*}{Observation}
\newtheorem{conjecture}[theorem]{Conjecture}
\newtheorem*{conjecture*}{Conjecture}

\newcommand{\supp}{\operatorname{supp}}

\newcommand{\todo}[1]{}

\address{Nicolas Monod, SB-MATHGEOM-EGG, EPFL, Station 8, CH-1015 Lausanne, Switzerland.}
\email{nicolas.monod@epfl.ch\kern5mm{\it URL}:\;egg.epfl.ch/$\sim$nmonod}

\author{Nicolas Monod}

\address{Henrik Densing Petersen,
Department of Mathematical Sciences\\
University of Copenhagen\\
Universitetsparken 5
2100 K{\o}benhavn \O, 
Denmark.}
\email{hdp@math.ku.dk\kern5mm{\it URL}:\;www.math.ku.dk/$\sim$hdp}
\thanks{NM was supported in part by the Swiss National Science Foundation and the ERC; HDP acknowledges the support of the Danish National Research Foundation (DNRF) through the Centre for Symmetry and Deformation.}

\author{Henrik D. Petersen}

\title[An obstruction to $\ell^p$-dimension]{An obstruction to $\boldsymbol\ell^{\boldsymbol p}$-dimension}

\begin{document}
\begin{abstract}
For any group $G$ containing an infinite elementary amenable subgroup, and any $2<p<\infty$, there exists closed invariant subspaces $E_i\nearrow \ell^pG$ and $F\neq 0$ such that $E_i\cap F = 0$ for all $i$. This is an obstacle to $\ell^p$-dimension and gives a negative answer to a question of Gaboriau.
\end{abstract}

\maketitle

\section*{Introduction and Statement of the Result}
The Murray--von Neumann dimension, also called \emph{$\ell^2$-dimension}, has proved tremendously valuable since its introduction in 1936~\cite{MvN}. This is particularly true for its use through \emph{$\ell^2$-Betti numbers} of groups since the seminal article by Cheeger and Gromov~\cite{CG86}. This has prompted recent work in pursuit of a more general $\ell^p$-dimension for $1 < p < \infty$; see e.g.~\cite{Go10,Go12,Ben11}.  The purpose of this short note is to establish a fundamental obstruction to this endeavour.

\medskip
%
%
One of the central motivations to define a notion of $\ell^p$-dimension is to take advantage of the situations where $\ell^p$-cohomology carries information not accessible through to $\ell^2$-methods, see e.g.~\cite{Pansu08}. However, this involves typically $p$ large, in particular $p > 2$; unfortunately this range appears to be much more difficult than $p\leq 2$ (compare e.g.~\cite[Theorem 8.6]{Ben11}). 

\medskip
Given a (discrete) group $G$, a concrete test for the existence of an $\ell^p$-dimension with at least some positivity and continuity properties is the following question of Gaboriau, see Question~1.1 in~\cite{Go12}. Let $(E_i)_{i\in \mbold{N}}$ be an \emph{invariant exhaustion} of $\ell^p G$, i.e.\ an increasing sequence of (left-) $G$-invariant closed subspaces $E_i\subseteq \ell^p G$ with dense union. If a $G$-invariant closed subspace $F\subseteq \ell^p G$ meets each $E_i$ trivially, does it follow that $F$ is trivial?

This question also seems to be the main missing point to establish the measure-equivalence invariance of the vanishing of $\ell^p$-cohomology in parallel to~\cite{Ga02}. Such invariance would notably settle the well-known open problem of the vanishing of reduced $\ell^p$-cohomology for amenable groups.

\medskip
The case of \emph{amenable} groups was expected to be much more tractable, and indeed Gournay~\cite{Go12} has given a positive answer to Gaboriau's question when $G$ is amenable and $p\leq 2$. The present note shows that the situation is very much opposite for $p>2$, even when $G=\mbold{Z}$.

\begin{theorem}\label{thm:main}
Let $G$ be an infinite elementary amenable group and let $2<p<\infty$.

There exists a closed invariant subspace $0 \neq F\subseteq \ell^pG$ and an invariant exhaustion $(E_i)_{i\in \mbold{N}}$ of $\ell^pG$ such that $E_i\cap F = 0$ for all $i\in \mbold{N}$.
\end{theorem}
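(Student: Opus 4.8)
\section*{Proof proposal}

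The plan is to reduce, via subgroup passage and the structure theory of elementary amenable groups, to the case where $G$ is abelian, and then to build $F$ and $(E_i)$ by Fourier analysis on the compact dual $\widehat G$, the crucial input being a singular measure of Salem type. First I record that the statement passes from a subgroup to the ambient group. If $H\leq G$, decompose $\ell^pG=\bigoplus^{\ell^p}_{gH\in G/H}\ell^p(gH)$, on which $G$ acts by permuting the summands according to the left action $G\curvearrowright G/H$. For a closed $H$-invariant subspace $V\subseteq\ell^pH\cong\ell^p(eH)$, the translate $gV\subseteq\ell^p(gH)$ depends only on the coset $gH$ (because $V$ is $H$-invariant), so $\widetilde V:=\bigoplus^{\ell^p}_{gH}gV$ is a well-defined closed $G$-invariant subspace; the operation $V\mapsto\widetilde V$ preserves inclusions and intersections, carries dense increasing unions to dense increasing unions, and $\widetilde V\neq 0$ iff $V\neq 0$. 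Applying this to the data $(F,(E_i))$ produced for $H$ yields the data for $G$. Now an infinite elementary amenable group either has an element of infinite order, hence contains $\mbold Z$, or is a torsion group, hence locally finite; an infinite locally finite group contains an infinite abelian subgroup (Hall--Kulatilaka/Kargapolov); and an infinite torsion abelian group contains a subgroup isomorphic to $\bigoplus_k\mbold Z/q_k$ (finite cyclic groups) or to a Pr\"ufer group $\mbold Z(q^\infty)$. It therefore suffices to treat $G\in\{\mbold Z,\ \bigoplus_k\mbold Z/q_k,\ \mbold Z(q^\infty)\}$, in each of which $\widehat G$ is an infinite compact metrizable abelian group (respectively $\mbold T$, a Cantor group, a ring of $q$-adic integers $\mbold Z_q$). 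I identify $\ell^pG$ with a space of distributions on $\widehat G$ through the Fourier transform $z\mapsto\widehat z$.

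The engine is the following Salem-type fact: for $2<p<\infty$ there is a nonzero finite measure $\mu$ on $\widehat G$ whose closed support $E:=\supp\mu$ is Haar-null and whose Fourier transform lies in $\ell^pG$. On $\mbold T$ this is a classical theorem of Salem. On the profinite duals one writes $\widehat G=\varprojlim K_n$ with $|K_n|\to\infty$, groups the indices into blocks so that successive quotients are large, and lets $\mu$ be the product over the blocks of uniform measures on pseudorandom subsets of density near $1/2$; the existence of such subsets with small nontrivial Fourier coefficients is elementary, and the point is that making $E$ null while keeping $\widehat\mu\in\ell^p$ amounts to solving $\sum(\text{block error})^{p}<\infty$ together with $\sum(\text{block codimension})=\infty$, which is possible precisely because $p>2$. (For $p=2$ no such $\mu$ exists: $\widehat\mu\in\ell^2$ forces $\mu$ absolutely continuous, which is the reason $\ell^2$-dimension exists.)

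Given $\mu$, set $F:=\overline{\operatorname{span}\{g\cdot\widehat\mu:g\in G\}}\subseteq\ell^pG$, the closed $G$-invariant subspace it generates. Then $F\neq 0$, and every $y\in F$ has $\widehat y$ supported in $E$, being a weak-$*$ limit of measures $\varphi\,\mu$ with $\varphi$ a trigonometric polynomial on $\widehat G$. Fix a decreasing basis of open neighbourhoods $U_i\downarrow E$ of $E$ (metric balls on $\mbold T$; unions of cosets of a shrinking chain of open subgroups in the profinite case, so that each $U_i$ is clopen) and put
\[
E_i:=\overline{\bigl\{\,z\in\ell^pG:\ \widehat z\ \text{is smooth and}\ \supp\widehat z\cap\overline{U_i}=\varnothing\,\bigr\}}.
\]
Multiplying $\widehat z$ by a character keeps it smooth and does not enlarge $\supp\widehat z$, so each $E_i$ is a closed $G$-invariant subspace, and $E_i\subseteq E_{i+1}$ since $U_{i+1}\subseteq U_i$. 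Their union is dense: a functional $w\in\ell^{p'}G$ (with $p':=p/(p-1)$) annihilating $\bigcup_iE_i$ has $\widehat w\in L^p(\widehat G)$ by Hausdorff--Young, and the annihilation forces $\widehat w=0$ a.e.\ on $\widehat G\setminus E$, hence a.e.\ on $\widehat G$ because $E$ is null, hence $w=0$. Finally $E_i\cap F=0$: any vector in this intersection has Fourier transform supported in $E\subseteq U_i$ and, being a limit in $\ell^pG$ hence weak-$*$ in distributions, also vanishing on $U_i$; thus its Fourier transform vanishes identically and it is $0$. This gives the theorem in each base case, and the first paragraph propagates it to all infinite elementary amenable groups.

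The main obstacle is the Salem-type construction of $\mu$ on the duals other than $\mbold T$: one must arrange simultaneously that $\supp\mu$ be a closed Haar-null set and that the Fourier coefficients be $p$-summable, and naive product measures fail one condition or the other (they have full support, or flat coefficients in some directions). Isolating the block scheme and the exponent bookkeeping that makes these compatible exactly when $p>2$ is the technical heart; the remaining ingredients---the subgroup reduction, the structural reduction to abelian base cases, and the distributional Fourier calculus (which in the totally disconnected case reduces to finitely supported and locally constant functions)---are routine.
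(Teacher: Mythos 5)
Your overall architecture is the same as the paper's: a hereditary lemma to induce thin exhaustions up from subgroups, a reduction via Chou and Hall--Kulatilaka to $\mbold{Z}$ or to an infinite locally finite abelian group, and then, on the compact dual, a singular measure $\mu$ with $\widehat\mu\in\ell^p$ generating $F$, with the $E_i$ built from shrinking neighbourhoods of $\supp\mu$. Several of your implementation choices differ and are perfectly sound, in places arguably cleaner: you invoke Salem's classical construction on $\mbold{T}$ where the paper uses Saeki's theorem (Salem sets of Hausdorff dimension $\alpha\in(2/p,1)$ do give $\widehat\mu\in\ell^p$ for a fixed $p>2$, which is all the theorem needs); you prove $E_i\cap F=0$ by a direct distributional support argument rather than quoting Puls's Lemma~2; and you get density of $\bigcup_i E_i$ by Hahn--Banach plus Hausdorff--Young on the $\ell^{p'}$ dual, where the paper pushes forward the spectral projections $\bbb_{U_i^{\complement}}$ from $\ell^2$. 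All of that checks out, provided you also arrange $\bigcap_i\overline{U_i}=\supp\mu$ so that the annihilator argument localizes $\widehat w$ to a null set.

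The genuine gap is exactly where you defer to the ``technical heart'': the existence of $\mu$ on the profinite duals. Your further structural reduction lands you in two cases, $\bigoplus_k\mbold{Z}/q_k$ and the Pr\"ufer group $\mbold{Z}(q^\infty)$. In the first case your sketch (product over large blocks of uniform measures on density-$1/2$ subsets with small nontrivial Fourier coefficients) is workable and is in the same spirit as the paper's explicit construction, which takes the even simpler product of uniform measures on $A_j\setminus\{0\}$ with the block sizes $c_j$ repeated often enough that $\sum_j 1/c_j=\infty$ while $\sum_j(c_j-1)^{1-p}<\infty$ --- so, contrary to your parenthetical worry, a ``naive'' product does succeed once the blocks are arranged correctly. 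But in the Pr\"ufer case $\widehat{G}\cong\mbold{Z}_q$ is an inverse limit that is \emph{not} a topological product of finite groups, so ``the product over the blocks of uniform measures on subsets'' does not parse as stated: you need either a nested Cantor/Riesz-product construction whose Fourier estimates at level $n$ see all earlier digits, or you should simply note that $\mbold{Z}_q$ has no open subgroup of bounded exponent and invoke Saeki's Remark~(II), which is precisely the dichotomy the paper uses (Saeki for the unbounded-order case, the explicit product only when $\widehat G$ is genuinely a product of cyclic groups of bounded order via Hewitt--Ross 25.10). As written, the Pr\"ufer branch of your reduction is not covered by your measure construction.
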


Our current proof of this theorem involves notably a cameo appearance of the Feit--Thompson Theorem~\cite{Feit-Thompson} and a result of Saeki~\cite{Sa80} in classical harmonic analysis.

\medskip
In general, we will say that an invariant exhaustion $(E_i)_i$ of $\ell^pG$ is \emph{thin}, if there is a closed invariant subspace $F\neq 0$ satisfying $E_i\cap F=0$ for all $i$. It is not hard to see (and will be needed in the proof anyway) that the existence of such exhaustions can be induced up from subgroups. We thus record the following formally stronger statement:

\begin{corollary}\label{cor:main}
Thin invariant exhaustions of $\ell^p G$ for all $p>2$ exist more generally for every group $G$ containing an infinite elementary amenable subgroup.

\nobreak
In particular, this holds for every non-trivial group which is not torsion.
\end{corollary}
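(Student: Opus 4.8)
The plan is to reduce Corollary~\ref{cor:main} to Theorem~\ref{thm:main} by the ``induction'' procedure alluded to in the excerpt: thinness of invariant exhaustions passes from a subgroup $H\le G$ to $G$ itself. Granting this, the first assertion is immediate, and the ``in particular'' follows because a non-torsion group contains an element of infinite order, hence a subgroup isomorphic to $\mbold{Z}$, which is infinite and elementary amenable.

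For the induction, partition $G$ into its left cosets to get an isometric $G$-equivariant decomposition $\ell^pG=\bigoplus^{\ell^p}_{gH\in G/H}\ell^p(gH)$, in which left translation permutes the summands exactly as $G$ permutes $G/H$, and the stabiliser of the base coset $eH$ is $H$ acting on $\ell^p(eH)=\ell^pH$ by the left regular representation. Given a closed $H$-invariant subspace $V\subseteq\ell^pH$, set $\widehat V:=\bigoplus^{\ell^p}_{gH\in G/H}g\cdot V$, where $g\cdot V\subseteq\ell^p(gH)$; this makes sense because $H$-invariance of $V$ forces $g\cdot V$ to depend only on $gH$. Then $\widehat V$ is closed, being the intersection of the preimages of the closed sets $g\cdot V$ under the norm-one projections $\ell^pG\to\ell^p(gH)$; it is $G$-invariant, since left translation merely re-indexes the summands; and the assignment $V\mapsto\widehat V$ is monotone, satisfies $\widehat V=0$ iff $V=0$, and satisfies $\widehat{V_1}\cap\widehat{V_2}=\widehat{V_1\cap V_2}$ --- all because these properties can be tested one coset at a time.

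Now let $H\le G$ be infinite and elementary amenable, and let $(E_i)_{i\in\mbold{N}}$ be an invariant exhaustion of $\ell^pH$ with a closed $H$-invariant $F\neq0$ satisfying $E_i\cap F=0$ for all $i$, as furnished by Theorem~\ref{thm:main}. Then $(\widehat E_i)_i$ is increasing, $\widehat F\neq0$, and $\widehat E_i\cap\widehat F=\widehat{E_i\cap F}=0$, so the only point requiring an argument is density of $\bigcup_i\widehat E_i$ in $\ell^pG$. For this, a finitely supported $f\in\ell^pG$ meets only finitely many cosets $g_1H,\dots,g_nH$, each $g_j^{-1}\cdot(f|_{g_jH})$ lies in $\ell^pH$ and is approximated arbitrarily well by some element of $\bigcup_iE_i$, and gluing these finitely many approximations (and taking $0$ on every other coset) produces an element of $\widehat E_i$ for $i$ large that is close to $f$; since finitely supported functions are dense, so is $\bigcup_i\widehat E_i$. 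Hence $(\widehat E_i)_i$ is a thin invariant exhaustion of $\ell^pG$, which proves the corollary for $G$ containing such an $H$, and for non-torsion $G$ upon taking $H=\langle x\rangle\cong\mbold{Z}$ with $x$ of infinite order. The substance of the result lies entirely in Theorem~\ref{thm:main}; the only mild obstacle here is the density verification, which as indicated reduces to the finitely-supported case and then to the density of $\bigcup_iE_i$.
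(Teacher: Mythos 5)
Your proposal is correct and follows essentially the same route as the paper: its Lemma~\ref{lma:hereditary} performs exactly this induction from $H$ to $G$, using the $G$-equivariant isomorphism $\ell^p G\cong\ell^p(G/H,\ell^p(H))$ (a cocycle reformulation of your coset decomposition) and setting $\tilde E_i=\ell^p(G/H,E_i)$ and $\tilde F=\ell^p(G/H,F)$, which coincide with your $\widehat{E_i}$ and $\widehat F$. You merely spell out the closedness, intersection, and density verifications that the paper leaves to the reader.
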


Thus, for instance, any infinite linear group $G$ over any field of any characteristic admits thin invariant exhaustions of $\ell^p G$ for all $p>2$ because the assumptions of Corollary~\ref{cor:main} are satisfied due to the Tits alternative (Theorems~1 and~2 in~\cite{Tits72}).

\bigskip
Perhaps one should find another proof of Theorem~\ref{thm:main}.  In any case, this would be necessary to address the following.

\begin{conjecture}
Any infinite group $G$ admits a thin invariant exhaustion of $\ell^p G$ for all $p>2$.
\end{conjecture}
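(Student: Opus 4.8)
\emph{Proof proposal for the conjecture.} The plan is to leave the amenable world behind entirely and isolate the single class of groups that the inductive argument of Corollary~\ref{cor:main} cannot reach. First I would observe that, by Corollary~\ref{cor:main}, it suffices to treat an infinite group $G$ that contains no infinite elementary amenable subgroup. Such a $G$ is necessarily torsion, since an element of infinite order would generate a copy of $\mathbb{Z}$. Moreover it cannot be locally finite: by the Hall--Kulatilaka theorem --- precisely the place where the Feit--Thompson input enters --- every infinite locally finite group contains an infinite abelian, hence elementary amenable, subgroup. Therefore some finitely generated subgroup $H\le G$ is infinite, and $H$ is then a finitely generated infinite torsion group that again contains no infinite elementary amenable subgroup; applying Corollary~\ref{cor:main} once more, we may assume $G=H$. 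The whole conjecture thus reduces to the case of a finitely generated infinite torsion group with no infinite abelian subgroup, the paradigm being a Tarski monster in the sense of Ol'shanskii --- a group for which all three standard mechanisms fail simultaneously: no infinite abelian (indeed no infinite amenable) subgroup, and no free subgroup.

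The difficulty this exposes is that Saeki's theorem is a statement in commutative harmonic analysis, invoked through an infinite abelian subgroup; for a Tarski monster there is no such subgroup and hence no Fourier--Gelfand transform with which to analyse invariant subspaces. I would therefore try to replace the commutative input by a construction internal to $G$. Fix a lacunary sequence $(g_n)_n$ in $G$ with $g_n\to\infty$, and exploit the one structural feature that survives in every infinite group at $p>2$: the strict inclusion $\ell^2 G\subseteq \ell^p G$ together with the existence of vectors in $\ell^p G\setminus \ell^2 G$. Choose coefficients $(a_n)$ with $\sum_n|a_n|^p<\infty$ but $\sum_n|a_n|^2=\infty$, set $\xi=\sum_n a_n\,\delta_{g_n}\in\ell^p G\setminus\ell^2 G$, and take $F=\overline{\mathbb{C}[G]\ast\xi}$, the principal closed invariant subspace it generates. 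For the exhaustion I would use the other standard family of invariant subspaces: given finitely supported right convolvers $\psi_i$ forming an approximate identity, the kernels and ranges of right convolution by $\psi_i$ are left-invariant and closed, and a suitable such family can be arranged to have dense union in $\ell^p G$.

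The crux, and the step I expect to block the whole programme, is to prove $E_i\cap F=0$ while keeping $\bigcup_i E_i$ dense, without any spectral calculus. In the abelian case this disjointness is read off from disjointness of spectra via Saeki; on a Tarski monster one would need a genuinely noncommutative, coarse-geometric substitute --- an independence estimate showing that the $G$-translates of a sufficiently singular $\ell^p$-vector cannot be approximated from the $E_i$, powered by the gap between $\ell^p$ and $\ell^2$ when $p>2$. Developing such $\ell^p$-harmonic analysis on non-amenable torsion groups is, to my knowledge, entirely open, and it is the natural place where one must find the new idea that the authors themselves call for. I would regard it as a serious possibility that the conjecture fails in exactly this class, so in parallel I would attempt the opposite: to build an honest, additive $\ell^p$-dimension on a Tarski monster, whose existence would contradict the presence of a thin exhaustion and thereby refute the conjecture.
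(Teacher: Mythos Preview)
The statement you are attempting is a \emph{conjecture} in the paper, not a theorem; the authors offer no proof and explicitly say a new idea would be needed. So there is no ``paper's own proof'' to compare against, and the relevant question is only whether your proposal constitutes a proof. It does not, and to your credit you say so yourself.

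Your reduction is correct and is exactly the observation the paper's results force: by Lemma~\ref{lma:hereditary} and Corollary~\ref{cor:main} one is left with a finitely generated infinite torsion group with no infinite abelian subgroup (equivalently, no infinite elementary amenable subgroup, via Chou and Hall--Kulatilaka). The Tarski monsters are indeed the paradigm, and this is precisely the class the paper cannot reach.

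Where the proposal breaks down is the construction. Choosing $\xi\in\ell^pG\setminus\ell^2G$ and setting $F=\overline{\mathbb{C}[G]\ast\xi}$ gives no control whatsoever on $F$: there is no reason $F$ should be a proper subspace, let alone ``thin'' --- a generic $\xi$ may well be cyclic for the left regular representation on $\ell^pG$. In the paper's proofs (Propositions~\ref{prop:caseZ} and~\ref{prop:caseTor}) the smallness of $F$ comes from the \emph{support} condition $\lambda(\supp\mu)=0$ on the spectral side, not from the mere fact that $\widehat\mu\in\ell^p\setminus\ell^2$. Your $E_i$ are even less specified: ``kernels and ranges of right convolution by an approximate identity'' pulls in two opposite directions --- kernels shrink to $0$, ranges of an approximate identity are typically all of $\ell^pG$ --- and neither gives an increasing family that both exhausts and avoids a fixed nonzero $F$. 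The disjointness $E_i\cap F=0$ in the abelian case is again a spectral phenomenon (disjoint supports under the Fourier transform), and you have proposed nothing to replace it.

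In short: the reduction is right, the remaining case is genuinely open, and the sketch after the reduction is not a strategy but a list of the objects one might hope to use. Your closing remark --- that one should simultaneously try to refute the conjecture on a Tarski monster --- is reasonable, but likewise not a proof.
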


\section*{Proof of the Result} 
First we observe that the property of \emph{not} having a thin invariant exhaustion is hereditary:

\begin{lemma} \label{lma:hereditary}
Let $G$ be a group and $1\leq p<\infty$. Suppose that $G$ contains a subgroup $H$ which has a thin invariant exhaustion $(E_i)_i$ of $\ell^pH$. Then $G$ has a thin invariant exhaustion of $\ell^pG$.
\end{lemma}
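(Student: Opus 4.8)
The plan is to induce the exhaustion up from $H$ to $G$ using the coinduced structure of $\ell^p G$ as an $H$-module. Pick a set $T$ of right coset representatives for $H$ in $G$, so that $G = \bigsqcup_{t\in T} Ht$ and correspondingly $\ell^p G \cong \bigoplus_{t\in T}^{\ell^p} \ell^p(Ht)$, where each summand $\ell^p(Ht)$ is, as a left $H$-module, isometrically isomorphic to $\ell^p H$ via $h\mapsto ht$. Thus $\ell^p G$ is an $\ell^p$-direct sum of copies of $\ell^p H$, permuted among themselves by the right $T$-coordinates but with the left $H$-action acting diagonally coordinate-by-coordinate. (One must be a little careful that $T$ need not be $H$-invariant on the right, but the left-regular $G$-action on $\ell^p G$ restricted to $H$ is exactly this diagonal action on the coordinates indexed by $T$.)

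Given the thin exhaustion $(E_i)_i$ of $\ell^p H$ witnessed by $0\neq F\subseteq \ell^p H$, I would set
\[
\widetilde{E_i} \;=\; \Bigl\{\,\xi\in\ell^p G \;:\; \xi|_{Ht}\in E_i \text{ for all } t\in T,\ \text{and } \xi|_{Ht}=0 \text{ for all but finitely many } t\,\Bigr\}^{-},
\]
i.e.\ the closed span in $\ell^p G$ of the "finitely-supported-in-$T$" vectors each of whose $Ht$-components lies in $E_i$; equivalently $\widetilde{E_i}$ is the $\ell^p$-direct sum $\bigoplus_{t\in T}^{\ell^p}(E_i\text{ sitting in the }t\text{-th copy})$. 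Dually, let $F'\subseteq \ell^p G$ be the single copy of $F$ placed in the coordinate $t_0\in T$ corresponding to the coset $H=Ht_0$ (so $F'\subseteq\ell^p H\subseteq\ell^p G$ under the natural inclusion), which is nonzero. Then I must check three things: (1) each $\widetilde{E_i}$ is a closed $G$-invariant subspace of $\ell^p G$ — closedness is by construction, and $G$-invariance follows because left multiplication by $g\in G$ permutes the cosets $Ht$ among themselves and acts within/between them by the (left-$H$-equivariant) identifications, so it carries a vector with all components in $E_i$ to another such; (2) the union $\bigcup_i \widetilde{E_i}$ is dense in $\ell^p G$ — this holds because finitely-$T$-supported vectors with each component in some $E_i$ already approximate all finitely-$T$-supported vectors (using density of $\bigcup_i E_i$ in $\ell^p H$ in each of finitely many coordinates and a triangle-inequality estimate on the $\ell^p$-norm of the finite direct sum), and finitely-$T$-supported vectors are dense in $\ell^p G$; (3) $\widetilde{E_i}\cap F' = 0$ for all $i$ — since $F'$ lives entirely in the single coordinate $t_0$, any vector in the intersection has $t_0$-component in $E_i\cap F = 0$ and all other components $0$.

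The only genuinely delicate point — and I expect this to be where some care is needed — is the verification in step (2) that the subspaces $\widetilde{E_i}$ really exhaust $\ell^p G$ and in step (1) that $G$-invariance is not spoiled by the fact that $G$ does not preserve the splitting $T$ "on the nose"; the resolution is that $G$ does preserve the partition of $G$ into $H$-cosets and acts by $H$-equivariant isometries between the summands, which is all that is required. Everything else is a routine bookkeeping with $\ell^p$-direct sums, so I would state the coset decomposition of $\ell^p G$ as an $H$-module explicitly, then define $\widetilde{E_i}$ and $F'$, and then dispatch (1)–(3) in a few lines each. Note this lemma is exactly what upgrades Theorem~\ref{thm:main} to Corollary~\ref{cor:main}.
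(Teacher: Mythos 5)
Your overall strategy --- decompose $\ell^p G$ over the cosets of $H$, put $E_i$ in the coordinates, place $F$ somewhere, and check invariance, density and trivial intersection --- is the same as the paper's, but your setup of the decomposition contains a genuine error. Left multiplication by $g\in G$ does \emph{not} permute the right cosets $Ht$: it sends $Ht$ to $gHt$, which is again a right coset of $H$ only when $g$ normalizes $H$; what left multiplication permutes are the \emph{left} cosets $tH$. So your step (1) fails as written, and not merely formally: for $G=S_3$, $H=\langle(12)\rangle$ and $E$ the ($H$-invariant) line spanned by $\delta_e+\delta_{(12)}$, your $\widetilde{E}$ is exactly $\ker(\lambda_{(12)}-\operatorname{Id})$, and $\lambda_g$ carries this onto $\ker(\lambda_{g(12)g^{-1}}-\operatorname{Id})$, a genuinely different subspace for $g=(13)$. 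The tension you are running into is real: right cosets give blocks that are honestly invariant under left multiplication by $H$ and $H$-isomorphic to $\ell^p H$ via $h\mapsto ht$, but they are scrambled by the left $G$-action; left cosets are permuted by $G$, but the identification $\ell^p(tH)\cong\ell^p H$ depends on the chosen representative, and the $G$-action carries the $t$-block to the $gt$-block only up to a left translation by an element of $H$. The paper resolves this by working with $\ell^p(G/H,\ell^p H)$ (left cosets) equipped with the cocycle-twisted action $(g.\xi)(x)=c(g,x).\xi(g^{-1}x)$, where $c:G\times G/H\to H$; since $E_i$ and $F$ are $H$-invariant, the twist is harmless and $\ell^p(G/H,E_i)$ is $G$-invariant. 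You need to make this cocycle explicit rather than appeal to ``$H$-equivariant identifications''.

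There is a second gap: your $F'$, a single copy of $F$ sitting in the coordinate of the trivial coset, is $H$-invariant but not $G$-invariant (left multiplication by $g\notin H$ moves that block onto another one), whereas the definition of a thin exhaustion requires a closed \emph{$G$-invariant} witness. The fix is to put $F$ in \emph{every} coordinate, $\widetilde F:=\ell^p(G/H,F)$, exactly as the paper does; this is $G$-invariant, nonzero, and meets $\widetilde{E_i}=\ell^p(G/H,E_i)$ trivially because the intersection is computed coordinatewise and $E_i\cap F=0$. (Taking the closed invariant subspace generated by your $F'$ would also work, since it is contained in $\ell^p(G/H,F)$, but then the trivial-intersection check is no longer the one-line argument you give.) Your points (2) and (3) go through once the decomposition is corrected.
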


Thus Corollary~\ref{cor:main} follows indeed from Theorem~\ref{thm:main}.

\begin{proof}[Proof of Lemma~\ref{lma:hereditary}]
Let $G$ and $p$ be given and fix a subgroup $H$ of $G$ and a thin invariant exhaustion $(E_i)_i$ of $\ell^pH$.  We have a $G$-equivariant isomorphism of $\ell^p$-spaces
\begin{equation*}
\ell^p G \cong \ell^p(G/H,\ell^p(H)),
\end{equation*}
where the $G$-action on the right-hand side is given by $(g.\xi)(x) = c(g,x).\xi(g^{-1}.x)$ for $g\in G, x\in G/H, \xi\in \ell^p(G/H,\ell^p(H))$, and $c:G\times G/H \rightarrow H$ a cocycle representative for the inclusion $H < G$.

Now take $F\neq0$ a closed invariant subspace of $\ell^pH$ such that $E_i\cap F=0$ for all $i$, and define $\tilde{E}_i := \ell^p(G/H,E_i)$ and similarly $\tilde{F}:=\ell^p(G/H,F)$. Then $(\tilde{E}_i)_i$ is a thin invariant exhaustion of $\ell^pG$.
\end{proof}

From here on, the proof splits into two cases:

\begin{enumerate}[(A)]
\item $G$ contains an element of infinite order.
\item $G$ is a torsion group.
\end{enumerate}

\subsection*{Proof in case (A)}
By Lemma~\ref{lma:hereditary}, we can assume $G\cong \mbold{Z}$. Recall that if $\mu$ is a measure on the Borel $\sigma$-algebra in a topological space $X$, the support of $\mu$ is defined by
\begin{equation*}
\supp \mu := \{ x\in X \mid \mu(V)>0\ \ \forall\, V\owns x \; \textrm{open} \}.
\end{equation*}
It follows directly from the definition that $\supp \mu$ is a closed set. We denote the Lebesgue measure on $\mbold{T}$ by $\lambda$ and the Fourier--Stieltjes coefficients of $\mu$ by $\widehat\mu$.

\begin{proposition} \label{prop:caseZ}
Let $2<p<\infty$ and suppose that $\mu$ is a measure on $\mbold{T}$ such that $\lambda(\supp \mu) = 0$ and $\widehat{\mu} \in \ell^p\mbold{Z}$. Denote by $F$ the closed, invariant subspace of $\ell^p\mbold{Z}$ generated by $\widehat{\mu}$. Then there is an invariant exhaustion $(E_i)_{i\in \mbold{N}}$ of $\ell^p\mbold{Z}$ such that $F\cap E_i=0$ for all $i \in \mbold{N}$.
\end{proposition}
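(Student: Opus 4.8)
The plan is to argue on the Fourier side, using the support calculus for pseudo-measures. Identify $\ell^{p}\mbold{Z}\subseteq\ell^{\infty}\mbold{Z}$ with a linear subspace of the space $PM(\mbold{T})=A(\mbold{T})^{*}$ of pseudo-measures on $\mbold{T}$, where $A(\mbold{T})=\widehat{\ell^{1}\mbold{Z}}$ is the Fourier algebra; under this identification $\widehat\mu$ corresponds to the measure $\mu$ itself, the shift operator on $\ell^{p}\mbold{Z}$ corresponds to multiplication of pseudo-measures by a character, and $\ell^{p}$-convergence implies $\ell^{\infty}$-convergence, hence weak-$*$ convergence in $PM(\mbold{T})$. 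For a closed set $L\subseteq\mbold{T}$ let $V_{L}$ be the set of $\xi\in\ell^{p}\mbold{Z}$ whose associated pseudo-measure is supported in $L$. Invoking the classical facts that the support of a pseudo-measure is a well-defined closed set, that the pseudo-measures supported in a fixed closed set form a weak-$*$ closed subspace stable under multiplication by characters, and that a pseudo-measure with empty support vanishes, one obtains: each $V_{L}$ is a closed translation-invariant subspace of $\ell^{p}\mbold{Z}$; $V_{L}\cap V_{L'}=0$ whenever $L\cap L'=\varnothing$; and, since the pseudo-measure support of $\mu$ coincides with its support as a measure, namely the closed $\lambda$-null set $K:=\supp\mu$, one has $F\subseteq V_{K}$.

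Next put $L_{i}:=\{x\in\mbold{T}:d(x,K)\ge 1/i\}$, a closed set disjoint from $K$ with $L_{i}\nearrow\mbold{T}\setminus K$, and set $E_{i}:=V_{L_{i}}$. Then $(E_{i})_{i}$ is an increasing sequence of closed invariant subspaces of $\ell^{p}\mbold{Z}$ and $E_{i}\cap F\subseteq V_{L_{i}}\cap V_{K}=0$ for each $i$, so the whole statement reduces to showing that $\bigcup_{i}E_{i}$ is dense in $\ell^{p}\mbold{Z}$. It suffices to approximate the finitely supported sequences, i.e.\ the coefficient sequences $\widehat P$ of trigonometric polynomials $P$. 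Pick smooth functions $\chi_{i}$ on $\mbold{T}$ with $0\le\chi_{i}\le 1$, vanishing on a neighbourhood of $K$ and equal to $1$ off $\{x:d(x,K)\le 2/i\}$; then $\chi_{i}\in A(\mbold{T})$, hence $\psi_{i}:=P\chi_{i}\in A(\mbold{T})$, and since $\psi_{i}$ is supported in some $L_{j}$ its coefficient sequence belongs to $E_{j}\subseteq\bigcup_{k}E_{k}$. On the other hand $\widehat P-\widehat{\psi_{i}}=\widehat{P(1-\chi_{i})}$, the function $P(1-\chi_{i})$ is supported in $\{x:d(x,K)\le 2/i\}$, whose Lebesgue measure decreases to $\lambda(K)=0$, so $\|P(1-\chi_{i})\|_{L^{p'}(\mbold{T})}\to 0$; the Hausdorff--Young inequality $\|\widehat h\|_{\ell^{p}}\le\|h\|_{L^{p'}(\mbold{T})}$, valid since $p'\le 2$, then gives $\|\widehat P-\widehat{\psi_{i}}\|_{\ell^{p}}\to 0$.

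The heart of the matter is this last density step: one needs an approximation scheme whose approximants are simultaneously supported away from $K$ (so that they fall into the subspaces $E_{i}$) and close to the target in the $\ell^{p}$-norm, and the only mechanism producing both at once is the combination of the nullity $\lambda(\supp\mu)=0$ with the Hausdorff--Young bound $L^{p'}\to\ell^{p}$, available precisely because $p>2$ forces $p'<2$. The remainder is routine bookkeeping with the support calculus of pseudo-measures.
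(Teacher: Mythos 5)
Your argument is correct, and while it follows the same overall strategy as the paper --- localize on the Fourier side away from the Lebesgue-null set $K=\supp\mu$ --- the technical implementation is genuinely different at both key steps. For the separation $E_i\cap F=0$, the paper defines $E_i:=\ker(\operatorname{Id}-S_i)$ for concrete convolution operators $S_i=\widehat{\sigma_i}$ built from smooth cutoffs $\sigma_i$ vanishing near $K$, and quotes Lemma~2 of~\cite{Pu98} to get $F\subseteq\ker S_i$; you instead work with the spaces $V_L$ of sequences whose associated pseudo-measure is supported in a closed set $L$, which packages the same support calculus (your observation that $V_K\subseteq\ker S_i$ because $\sigma_i\xi$ has empty support is essentially the content of Puls's lemma) but makes the subspaces $E_i=V_{L_i}$ canonical rather than dependent on a choice of cutoffs. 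For the density of $\bigcup_i E_i$, the paper passes through the $\ell^2$-theory, using spectral projections $p_i\in L\mbold{Z}$ corresponding to $\bbb_{U_i^\complement}$ and the dense contractive embedding $\ell^2\mbold{Z}\hookrightarrow\ell^p\mbold{Z}$; you approximate finitely supported sequences directly by $\widehat{P\chi_i}$ and control the error via Hausdorff--Young $L^{p'}(\mbold{T})\to\ell^p\mbold{Z}$, which is more elementary (no von Neumann algebra language) though it rests on the same underlying fact that $p\geq 2$ is the range where $L^{p'}$-smallness of a set of small measure transfers to $\ell^p$-smallness of Fourier coefficients. Both routes are sound; the paper's generalizes more directly to the profinite setting of case~(B) (where ``smooth'' becomes ``locally constant''), while yours has the merit of exhibiting the exhaustion intrinsically in terms of supports and of making transparent exactly where the hypothesis $\lambda(K)=0$ and the restriction $p>2$ enter.
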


\begin{proof}
Let $p,\mu$ be given as in the statement and denote $K:=\supp \mu$. We can choose a decreasing family $(U_i)_{i\in \mbold{N}}$ of open neighbourhoods of $K$, such that
\begin{enumerate}[(i)]
\item $K=\bigcap_i U_i$.
\item $\overline{U_{i+1}} \subseteq U_{i}$ for all $i\in \mbold{N}$.
\end{enumerate}

By the existence of a smooth partition of unity, there exist smooth functions $\sigma_i$ on $\mbold{T}$ such that for every $i\in \mbold{N}, t\in \mbold{T}$ we have
\begin{equation*}
\sigma_i(t) =
\begin{cases}
0 & \mbox{if } t\in \overline{U_{i+1}},\\
1 & \mbox{if } t\in U_i^{\complement}.
\end{cases}
\end{equation*}
On $U_i\setminus \overline{U_{i+1}}$, we do not care about the value of $\sigma_i(t)$ as long as $\sigma_i$ is smooth.

By the smoothness of $\sigma_i$, the Fourier transforms $S_i:=\widehat{\sigma_i}$ are in $\ell^1\mbold{Z}$ for all $i\in \mbold{N}$. Further, we observe that in $\ell^1\mbold{Z}$ we have for every $i\in \mbold{N}$ that 
\begin{equation} \label{eq:Sid}
S_{i+1}*S_i=S_i,
\end{equation}
since the identity $\sigma_{i+1}(t)\cdot \sigma_i(t) = \sigma_i(t)$ holds pointwise on $\mbold{T}$. Indeed, fixing some $i$ we observe that both sides of the latter equality are zero if $t\in U_{i+1}$, and that if $t\in U_{i+1}^{\complement}$ then $\sigma_{i+1}(t)=1$ so that the equality is tautological.

From this point on we consider the $S_i$ as translation-invariant operators on $\ell^p\mbold{Z}$, acting by convolution.

Denote by $F$ the closed invariant subspace of $\ell^p\mbold{Z}$ generated $\widehat{\mu}$. By~\cite[Lemma 2]{Pu98} we have $F\subseteq \ker S_i$ for all $i$. We put $E_i := \ker(\operatorname{Id}_{\ell^p\mbold{Z}}-S_i)$. The $E_i$ are then closed, invariant subspaces, and it is obvious that $F\cap E_i=0$ for all $i\in \mbold{N}$.

The fact that $E_i\subseteq E_{i+1}$ for all $i\in \mbold{N}$ follows directly from the identity~\eqref{eq:Sid}. Indeed, take $x\in E_i$. Then we get $x=S_i.x=(S_{i+1}S_i).x=S_{i+1}.(S_i.x) = S_{i+1}.x$ whence $x\in E_{i+1}$.

Finally, we need to see that $\bigcup_iE_i$ is dense in $\ell^p\mbold{Z}$. For this denote by $\iota_p:\ell^2\mbold{Z}\rightarrow \ell^p\mbold{Z}$ the canonical embedding. For each $i\in \mbold{N}$ there is an orthogonal projection $p_i\in L\mbold{Z}$, the von Neumann algebra generated by the left-regular representation of $\mbold{Z}$ on $\ell^2\mbold{Z}$, such that under that spatial isomorphism $L\mbold{Z} \cong L^{\infty}\mbold{T}$, $p_i$ corresponds to the indicator function, $\bbb_{U_i^{\complement}}$, of the complement of $U_i$. Then, denoting by $S_i^{(2)}$ the operator of convolution by $S_i$ on $\ell^2\mbold{Z}$ we have $p_i(\ell^2\mbold{Z}) \subseteq \ker(\operatorname{Id}_{\ell^2\mbold{Z}} -S_i^{(2)})$. It follows that $\iota_p(p_i(\ell^2\mbold{Z})) \subseteq \ker(\operatorname{Id}_{\ell^p\mbold{Z}} - S_i) = E_i$.

Then it just remains to note that, since $\lambda(U_i^{\complement}) \nearrow_i 1$, the union $\bigcup_i p_i(\ell^2\mbold{Z})$ is dense in $\ell^2\mbold{Z}$. Thus $\bigcup_i \iota_p(p_i(\ell^2\mbold{Z}))$ is dense in $\ell^p\mbold{Z}$.
\end{proof}

To complete the proof of Theorem~\ref{thm:main} in case (A), we now need to know that a measure $\mu$ as in the statement of the previous proposition does indeed exists. This follows from the main theorem of~\cite{Sa80} as we shall now recall. Let $\phi: c_0(\mbold{Z})\rightarrow c_0(\mbold{Z})$ be a continuous map (with respect to sup-norm); for instance, $\phi$ can be the composition by any continuous map $\phi_0:\mbold{R}\to\mbold{R}$ with $\phi_0(0)=0$. Then Saeki proves in~\cite{Sa80} that there is a probability measure $\mu$ with Lebesgue-null support such that the Fourier--Stieltjes coefficients $\widehat\mu(n)$ satisfy
\begin{equation}\label{eq:Saeki}
\sum_{n\in\mbold{Z}} \big| \widehat\mu(n)^2 \phi(\widehat\mu)(n) \big| <\infty.
\end{equation}
Thus, implicitly, $\widehat\mu$ vanishes at infinity; see Remark~(\ref{rem:c0}) below. If we choose for instance $\phi_0(x) = |x|^{p-2}$ for $p>2$, then~\eqref{eq:Saeki} shows that $\widehat\mu$ is in $\ell^p$, as desired. We can also obtain $\widehat\mu$ in all $\ell^p$ simultaneously if we arrange $\lim_{x\to 0} |x|^{\epsilon} /\phi_0(x) =0$ for all $\epsilon>0$; an explicit example is $\phi_0(x)=\exp(-\sqrt{\big|\log|x|\big|})$. This finishes the proof of case (A).

\subsection*{Proof in case (B)}
Since $G$ is an elementary amenable torsion group, it is locally finite by Theorem~2.3 in~\cite{Chou80}.  We can now appeal to a result of Hall--Kulatilaka~\cite{HaKu64}, relying in turn on the Feit--Thompson Theorem~\cite{Feit-Thompson}, to conclude that $G$ contains an infinite abelian subgroup. Therefore, using again Lemma~\ref{lma:hereditary}, we can assume that $G$ is a countably infinite locally finite abelian group. The proof of Theorem~\ref{thm:main} in this case follows the overall strategy of the case of $\mbold{Z}$, if one redefines a ``smooth function'' to mean a locally constant function on a totally disconnected compact space. We shall also need to address the question of the existence of a suitable measure $\mu$ on the Pontryagin dual $\widehat G$. We first prove a version of Proposition~\ref{prop:caseZ} in this setup. Let $\lambda_{\widehat{G}}$ denote the normalized Haar measure on the profinite group $\widehat{G}$.

\begin{proposition}\label{prop:caseTor}
Let $2<p<\infty$ and suppose that $\mu$ is a measure on $\widehat{G}$ such that $\lambda_{\widehat{G}}(\supp \mu) = 0$ and $\widehat{\mu} \in \ell^p G$. Denote by $F$ the closed, invariant subspace of $\ell^pG$ generated by $\widehat{\mu}$. Then there is an invariant exhaustion $(E_i)_{i\in \mbold{N}}$ of $\ell^pG$ such that $F\cap E_i=0$ for all $i \in \mbold{N}$.
\end{proposition}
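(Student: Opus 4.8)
The plan is to mimic the proof of Proposition~\ref{prop:caseZ} almost verbatim, replacing the role of smooth functions on $\mbold{T}$ by locally constant functions on the profinite group $\widehat G$, and the role of the Fourier transform of a smooth function (which lies in $\ell^1\mbold{Z}$) by the Fourier transform of a locally constant function, which lies in $\ell^1 G$ for the same reason: locally constant functions on a profinite group are finite linear combinations of indicator functions of cosets of open subgroups, and the Fourier transform of the indicator of such a coset is supported on the (finite-index) annihilator, hence is finitely supported. Concretely, write $K := \supp\mu$, which is closed, and note that in a profinite group a compact set admits a neighbourhood basis of \emph{clopen} sets; so I would choose a decreasing sequence $U_i$ of clopen neighbourhoods of $K$ with $K = \bigcap_i U_i$ and $\overline{U_{i+1}} = U_{i+1} \subseteq U_i$.

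The next step is to produce the ``cut-off'' functions $\sigma_i$. Here the situation is in fact cleaner than over $\mbold{T}$: since the $U_i$ are clopen, I can simply take $\sigma_i := \bbb_{U_i^{\complement}}$, the indicator function of the complement of $U_i$, which is itself locally constant. One still needs $\sigma_i$ to be $1$ on $U_i^{\complement}$ (tautological) and $0$ on $\overline{U_{i+1}} = U_{i+1}$ (clear since $U_{i+1} \subseteq U_i$), and crucially the pointwise identity $\sigma_{i+1}\cdot \sigma_i = \sigma_i$, which holds because $U_{i+1}^{\complement} \supseteq U_i^{\complement}$. Setting $S_i := \widehat{\sigma_i} \in \ell^1 G$ and viewing them as convolution operators on $\ell^p G$, the relation $S_{i+1}*S_i = S_i$ follows by taking Fourier transforms, exactly as in~\eqref{eq:Sid}. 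Then one defines $F$ as the closed invariant subspace generated by $\widehat\mu$ and invokes~\cite[Lemma 2]{Pu98} to get $F \subseteq \ker S_i$ for all $i$ (the cited lemma is harmonic-analytic and applies to any locally compact abelian group, so this transfers with no change), sets $E_i := \ker(\operatorname{Id} - S_i)$, and checks $F \cap E_i = 0$, $E_i \subseteq E_{i+1}$, word for word as before.

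The only genuinely different point is the density of $\bigcup_i E_i$ in $\ell^p G$. As in the $\mbold{Z}$ case, I would factor through $\ell^2$: let $\iota_p : \ell^2 G \to \ell^p G$ be the canonical embedding (this is bounded and has dense image since $G$ is countable, so $\ell^2 G \subseteq \ell^p G$ densely for $p > 2$), let $p_i \in L G$ be the orthogonal projection corresponding under $L G \cong L^\infty(\widehat G, \lambda_{\widehat G})$ to $\bbb_{U_i^{\complement}}$, and let $S_i^{(2)}$ be convolution by $S_i$ on $\ell^2 G$. Since $\sigma_i = \bbb_{U_i^{\complement}}$ exactly, here we even have $p_i = S_i^{(2)}$ as operators, so trivially $p_i(\ell^2 G) = \ker(\operatorname{Id} - S_i^{(2)})$; applying $\iota_p$ gives $\iota_p(p_i(\ell^2 G)) \subseteq \ker(\operatorname{Id}_{\ell^p G} - S_i) = E_i$. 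Finally, $\lambda_{\widehat G}(U_i^{\complement}) \nearrow \lambda_{\widehat G}(K^{\complement}) = 1$ because $\lambda_{\widehat G}(K) = \lambda_{\widehat G}(\supp\mu) = 0$ by hypothesis, so $\bigcup_i p_i(\ell^2 G)$ is dense in $\ell^2 G$ and hence $\bigcup_i \iota_p(p_i(\ell^2 G))$ is dense in $\ell^p G$, giving density of $\bigcup_i E_i$. I do not expect a serious obstacle here; the one thing to be careful about is simply to record the standard facts that a profinite group has a clopen neighbourhood basis and that Fourier transforms of locally constant functions are finitely supported, so that the analogue of ``$S_i \in \ell^1$'' holds. (The harder, genuinely new work — constructing the measure $\mu$ on $\widehat G$ with $\lambda_{\widehat G}$-null support and $\widehat\mu \in \ell^p G$ — is deferred to the discussion following this proposition, in analogy with the appeal to~\cite{Sa80} in case (A).)
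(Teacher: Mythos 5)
Your proposal is correct and takes essentially the same route as the paper: both reduce to the proof of Proposition~\ref{prop:caseZ} by choosing the $U_i$ to be compact-open (the paper realizes them as pre-images of subsets of finite quotients $\widehat{F}$ of $\widehat G$), taking $\sigma_i=\bbb_{U_i^{\complement}}$ so that $\widehat{\sigma_i}$ is finitely supported, and invoking the fact that Lemma~2 of~\cite{Pu98} holds for general countable abelian groups. Your observation that $p_i=S_i^{(2)}$ in this setting, slightly streamlining the density argument, is a correct minor simplification.
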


\begin{proof}
We first observe that Lemma~2 of~\cite{Pu98} holds in general for $G$ countable abelian and the proof goes through verbatim in this generality. This forms the basis for imitating the proof of proposition~\ref{prop:caseZ}; the only point that needs justification is the ``smoothness'' of $\sigma_i$ to ensure $\widehat{\sigma_i}\in\ell^1 G$. In fact, we claim that we can even arrange $\widehat{\sigma_i}\in \mbold{C}G$.

Indeed, since $\widehat{G}$ is profinite, we can take $U_i$ to be a compact-open subset given as the pre-image $\pi^{-1}(V_i)$ of a subset $V_i\subseteq \hat{F}$ in some finite quotient $\pi: \widehat{G}\to \widehat{F}$, where $F$ is a finite subgroup of $G$. The indicator function $\bbb_{U_i^{\complement}}$ of the complement is $\bbb_{V_i^{\complement}}\circ \pi$ and hence $\widehat{\bbb_{U_i^{\complement}}}$ is supported on $F$. Therefore, defining $\sigma_i= \bbb_{U_i^{\complement}}$, the transform $\widehat{\sigma_i}$ is finitely supported. Now~\eqref{eq:Sid} holds and the rest of the proof is unchanged.
\end{proof}

We now need again to show that a measure $\mu$ as in Proposition~\ref{prop:caseTor} exists to finish the proof of the theorem. Saeki states in~\cite[Remark~(II)]{Sa80} that his method still works as long as $\widehat G$ does not contain an open subgroup of bounded order (in Saeki's notation, the desired conclusion is obtained by choosing $f$ to be the constant function one on our compact group $\widehat G$, which he denotes by $G$). He indicates however that a different technique is required when $\widehat G$ \emph{does} contain an open subgroup of bounded order, although that technique is not provided. We shall therefore propose a proof in the proposition below.

\begin{proposition}
If $\widehat G$ contains an open subgroup of bounded order, then it admits a probability measure $\mu$ such that $\lambda_{\widehat{G}}(\supp \mu) = 0$ and $\widehat{\mu} \in \ell^p G$ for all $p>2$.
\end{proposition}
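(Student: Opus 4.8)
The plan is as follows. First, ``open subgroup of bounded order'' should be read as ``open subgroup of bounded exponent'': the hypothesis provides an open --- hence finite-index, hence infinite --- subgroup $H\le\widehat G$ with $NH=0$ for some $N\in\mbold{N}$. Let $H^{\perp}\le G=\widehat{\widehat G}$ be the annihilator of $H$ and put $d:=|H^{\perp}|=[\widehat G:H]<\infty$. Then $H$ is an infinite profinite abelian group annihilated by $N$, and its Pontryagin dual $\widehat H\cong G/H^{\perp}$ is a countable discrete abelian group annihilated by $N$; by Pr\"ufer's theorem $\widehat H$ is a direct sum of finite cyclic groups of order dividing $N$, so, dualising back, $H$ is topologically isomorphic to a countably infinite product $A:=\prod_{i\in\mbold{N}}\mbold{Z}/m_i$ with $2\le m_i\le N$, whose dual is $\widehat A=\bigoplus_i\mbold{Z}/m_i$. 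It then suffices to construct a probability measure $\mu_0$ on $A$ with $\lambda_A(\supp\mu_0)=0$ and $\widehat{\mu_0}\in\ell^p(\widehat A)$ for every $p>2$. Indeed, transporting $\mu_0$ along the open embedding $A\cong H\hookrightarrow\widehat G$ yields a probability measure $\mu$ on $\widehat G$ supported inside $H$, whence $\lambda_{\widehat G}(\supp\mu)=d^{-1}\lambda_A(\supp\mu_0)=0$, while $\widehat\mu(\gamma)=\widehat{\mu_0}(\gamma|_H)$ and the restriction homomorphism $G\to\widehat H$ is onto with kernel of order $d$, so $\|\widehat\mu\|_{\ell^pG}^p=d\,\|\widehat{\mu_0}\|_{\ell^p(\widehat A)}^p<\infty$ for every $p>2$.

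To build $\mu_0$ I would partition $\mbold{N}$ into consecutive finite blocks $B_1,B_2,\dots$ and set $H^{(k)}:=\prod_{i\in B_k}\mbold{Z}/m_i$, a finite abelian group of order $m^{(k)}:=\prod_{i\in B_k}m_i$. Since each $m_i\ge 2$ one may take $|B_k|$ comparable to $k$, so that $2^{k}\le m^{(k)}\le N^{|B_k|}$; in particular $m^{(k)}\to\infty$ at least geometrically while $\log m^{(k)}=O(k)$. On each $H^{(k)}$ I would choose a probability measure $\nu_k$ satisfying
\[
\big|\supp\nu_k\big|\ \le\ \tfrac23\,m^{(k)}\qquad\text{and}\qquad\max_{\chi\neq 1}\big|\widehat{\nu_k}(\chi)\big|\ \le\ \varepsilon_k:=C\sqrt{\frac{\log m^{(k)}}{m^{(k)}}}
\]
for a suitable absolute constant $C$, and set $\mu_0:=\bigotimes_k\nu_k$, a well-defined Borel probability measure on $A=\prod_kH^{(k)}$. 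The existence of such a \emph{flat} measure $\nu_k$ is the heart of the matter: one takes $\nu_k$ to be the normalised indicator function of a random subset $S\subseteq H^{(k)}$ obtained by retaining each element independently with probability $\tfrac12$, and verifies, via a Hoeffding-type concentration estimate for the exponential sums $\sum_{x\in S}\chi(x)$ together with a union bound over the $m^{(k)}-1$ nontrivial characters of $H^{(k)}$, that both displayed bounds hold with positive probability once $m^{(k)}$ is large enough (the finitely many smaller blocks being handled by any probability measure with support of size at most $\tfrac23m^{(k)}$). Note that the naive factor-by-factor product $\bigotimes_i\nu_i$ fails here: on $\mbold{Z}/2$ every probability measure with proper support is a point mass, whose nonzero Fourier coefficient has modulus $1$, so $\sum_i|\widehat{\nu_i}(\cdot)|^p$ diverges whenever infinitely many $m_i=2$. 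Grouping into large blocks, where pseudorandom subsets become available, is precisely what circumvents this phenomenon noted by Saeki.

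It remains to verify the two properties of $\mu_0$. Since each $H^{(k)}$ is discrete, $\supp\mu_0=\prod_k\supp\nu_k$, so
\[
\lambda_A(\supp\mu_0)\ =\ \prod_k\frac{\big|\supp\nu_k\big|}{m^{(k)}}\ \le\ \prod_k\tfrac23\ =\ 0 .
\]
A character of $A=\prod_kH^{(k)}$ is a family $\chi=(\chi_k)_k$ with $\chi_k=1$ for all but finitely many $k$, and $\widehat{\mu_0}(\chi)=\prod_k\widehat{\nu_k}(\chi_k)$; bounding $|\widehat{\nu_k}(\chi_k)|$ by $1$ if $\chi_k=1$ and by $\varepsilon_k$ otherwise and expanding the resulting product of series of nonnegative terms gives
\[
\sum_{\chi\in\widehat A}\big|\widehat{\mu_0}(\chi)\big|^p\ \le\ \prod_k\Big(1+\big(m^{(k)}-1\big)\,\varepsilon_k^p\Big).
\]
This converges for every $p>2$ because $(m^{(k)}-1)\,\varepsilon_k^p\le C^p(\log m^{(k)})^{p/2}\,(m^{(k)})^{1-p/2}=O\big(k^{p/2}\,2^{(1-p/2)k}\big)$, which is summable since $1-p/2<0$. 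Hence $\widehat{\mu_0}\in\ell^p(\widehat A)$ for all $p>2$, and by the first paragraph $\widehat\mu\in\ell^pG$ for all $p>2$, as required. The one genuine obstacle in this plan is the construction of the block measures $\nu_k$ --- together with the realisation that one must pass to blocks whose order tends to infinity rather than argue factor-by-factor; the remaining steps are routine manipulations with infinite product measures and Pontryagin duality.
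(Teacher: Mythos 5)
Your proposal is correct, but it diverges from the paper's proof at the key step, so it is worth comparing the two. Both arguments reduce to a countably infinite product of bounded cyclic groups (you by applying Pr\"ufer's theorem to the dual of an open subgroup and transporting the measure back to $\widehat G$; the paper by applying Hewitt--Ross, Corollary~25.10, to $\widehat G$ itself), then regroup the factors into finite blocks and take an infinite product measure. The difference is the trade-off between support size and flatness on each block. The paper's construction is explicit and deterministic: it arranges the blocks $A_j$ so that each order $c_j=k^n$ occurs exactly $k^n$ times, and takes the uniform measure on $A_j\setminus\{0\}$; this measure is extremely flat (nontrivial Fourier coefficients of modulus $(c_j-1)^{-1}$, giving $\sum_j c_j(c_j-1)^{1-p}<\infty$ precisely for $p>2$) but has support of density $1-1/c_j$, so nullity of the support must be extracted from the divergence $\sum_j 1/c_j=\infty$ --- which is exactly what the repetition of each block size $k^n$ exactly $k^n$ times achieves. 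You go the other way: blocks of geometrically growing order with support density uniformly at most $2/3$, so nullity is immediate, at the cost of needing measures that are only $O\bigl(\sqrt{\log m^{(k)}/m^{(k)}}\bigr)$-flat, which you must produce by a random-subset argument (Hoeffding plus a union bound over characters). That probabilistic lemma is standard and your sketch of it is sound, as are your product-measure and duality computations (including the correct identification of the obstruction to a factor-by-factor construction when $\mbold{Z}/2$ recurs, which is also why the paper regroups factors); but it is a genuine extra input, and the paper's choice of parameters shows it can be dispensed with entirely in favour of the uniform measure on the complement of a point.
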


\begin{proof}
The assumption allows us to apply Corollary~25.10 in~\cite{Hewitt-RossI} and deduce that $\widehat G$ is isomorphic to a (countably infinite) product of finite cyclic groups $\mbold{Z}/n_i\mbold{Z}$ where only finitely many distinct integers $n_i$ occur. Therefore, regrouping factors and using the Chinese remainder theorem, we can write $\widehat G\cong A_0 \times B^{\mbold{N}}$ for some finite abelian groups $A_0$ and $B\neq 0$. Let $k\geq 2$ be the cardinality of $B$. Regrouping factors some more, we write
$$\widehat G \cong A_0 \times \prod_{n=1}^\infty \Big(B^n\Big)^{k^n} = \prod_{j=0}^\infty A_j,$$
where for $j\geq 1$ each $A_j$ is a group of size $c_j$, a power of $k$, in such a way that $c_j=k^n$ occurs exactly $k^n$ times for each $n\in\mbold{N}$. We now define $\mu$ explicitly as the infinite product $\mu=\prod_{j=0}^\infty \mu_j$, where $\mu_0$ is the uniform distribution on $A_0$ and $\mu_j$ is the uniform distribution on the set $A_j\setminus \{0\}$ for all $j\geq 1$.

\medskip
Since $\lambda_{\widehat{G}}$ is the product of the normalized measures on each factor, the support of $\mu$ has Haar measure $\prod_{j= 1}^\infty (1-1/c_j)$. This product is zero because $\sum_{j= 1}^\infty 1/c_j =\infty$ since this is a sum of terms $k^{-n}$, each of which occurs $k^n$ times.

\medskip
We now fix $p>2$ and proceed to prove that $\widehat\mu$ is in $\ell^p G$. We have $G\cong \bigoplus_{j=0}^\infty \widehat{A_j}$ and the dual measure on $\widehat{A_j}$ is the counting measure. Therefore, it suffices to prove that the product of all $\ell^p$-norms of $\widehat{\mu_j}$ over $j\geq 1$ is finite. Writing $\bbb_X$ for the indicator function of a subset $X$ of $A_j$ or of $\widehat{A_j}$, we have $\widehat{\bbb_{\{0\}}} = c_j^{-1} \bbb_{\widehat{A_j}}$ for $0\in A_j$ and $\widehat{\bbb_{A_j}} = \bbb_{\{0\}}$ for $0\in \widehat{A_j}$. The density of $\mu_j$ can be written as $c_j(c_j-1)^{-1} (\bbb_{A_j} - \bbb_{\{0\}})$, which implies $\widehat{\mu_j} = \bbb_{\{0\}} - (c_j-1)^{-1} \bbb_{\widehat{A_j}\setminus \{0\}}$. We conclude that $\|\widehat{\mu_j}\|_p^p = 1 + (c_j-1)^{1-p}$. In other words, it remains to verify that the product $\prod_{j=1}^\infty ( 1 + (c_j-1)^{1-p})$ is finite, or equivalently that $\sum_{j=1}^\infty  (c_j-1)^{1-p}$ is finite. The latter series is $\sum_{n=1}^\infty k^n (k^n-1)^{1-p}$ which is indeed finite if and only if $p>2$.
\end{proof}

This completes the proof of Theorem~\ref{thm:main}.\qed

\section*{Remarks}
\begin{enumerate}[(i)]
\setlength{\itemsep}{10pt}
\item Saeki's measure $\mu$ on $\mbold{T}$ satisfies that $\mu*\mu$ has a continuous density. He points out that this cannot be achieved for certain groups in case~(B) above, see Remark~(III) of~\cite{Sa80}.

\item\label{rem:c0} Stating the above property~\eqref{eq:Saeki} of $\mu$ requires to know a priori that $\widehat\mu$ vanishes at infinity. This can be verified in Saeki's proof: it follows e.g.\ from condition~(1) on p.~230 in~\cite{Sa80}.

\item If $G$ is any amenable group, then a necessary condition for the existence of a thin invariant exhaustion $(E_i)_i$ as in Theorem~\ref{thm:main}, is that the invariant subspace $F$ have $\ell^p$-dimension zero in the sense of Gournay~\cite{Go12}. In fact, we conjecture that the converse is also true:

\begin{conjecture} \label{thm:localcriterion}
Let $G$ be a (countable, discrete) amenable group, $1\leq p < \infty$, and $F\subseteq \ell^pG$ a closed invariant subspace. Then $\dim_{\ell^p} F = 0$ if and only if there is an invariant exhaustion $(E_i)_i$ of $\ell^pG$ such that $E_i\cap F = 0$ for all $i$.

\nobreak
More generally one can also consider this conjecture for sofic groups~\cite{Ben11}.
\end{conjecture}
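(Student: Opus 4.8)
The plan is to prove the two implications of the conjecture separately; the ``only if'' direction is the one asserted in Remark~(iii), and the ``if'' direction carries the real content.

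For the necessary condition --- a thin exhaustion $(E_i)_i$ realizing $F$ forces $\dim_{\ell^p}F=0$ --- I would first extract from Gournay's work~\cite{Go12} the three formal properties of $\dim_{\ell^p}$ that the argument uses: monotonicity under bounded injective $G$-equivariant maps of $\ell^p$-modules, additivity on $\ell^p$-direct sums, and continuity from below with $\dim_{\ell^p}\ell^pG=1$. Granting these, one runs the classical argument: since $E_i\cap F=0$, the addition map $E_i\oplus F\to\ell^pG$ is bounded, injective and $G$-equivariant, so $\dim_{\ell^p}E_i+\dim_{\ell^p}F=\dim_{\ell^p}(E_i\oplus F)\le\dim_{\ell^p}\ell^pG=1$; letting $i\to\infty$ and using that $\bigcup_iE_i$ is dense yields $\dim_{\ell^p}E_i\to 1$, hence $\dim_{\ell^p}F=0$. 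The delicate point for $p\neq 2$ is monotonicity: polar decomposition is unavailable, so if it cannot be read off directly from the Følner definition of $\dim_{\ell^p}$ I would instead bound, for each Følner set $A_n$, the local rank of $F$ restricted to $A_n$ using that a vector of $F$ which is $\ell^p$-small on $A_n$ is nearly absorbed by $E_i$, whose restriction to $A_n$ is asymptotically of full rank.

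For the converse --- $\dim_{\ell^p}F=0$ implies a thin exhaustion realizing $F$ --- I would try to reproduce the mechanism of Propositions~\ref{prop:caseZ} and~\ref{prop:caseTor} for an arbitrary amenable $G$. There the exhaustion was $E_i=\operatorname{Fix}(S_i)=\ker(\operatorname{Id}-S_i)$ for a chain of convolution operators with $S_{i+1}*S_i=S_i$, $F\subseteq\ker S_i$, and $\overline{\bigcup_i\operatorname{Fix}(S_i)}=\ell^pG$, the last condition obtained by controlling, on the $\ell^2$-side, the trace of the spectral projection of $S_i^{(2)}$ at the eigenvalue $1$. So the goal is: produce from $F$ a chain $(S_i)$ in the $\ell^p$-multiplier completion of $\mbold{C}G$ with $S_{i+1}S_i=S_i$, $F\subseteq\ker S_i$, and $\operatorname{Fix}(S_i^{(2)})$ of von Neumann dimension tending to $1$; pushing back to $\ell^p$ along the dense inclusion $\ell^2G\cap\ell^pG\hookrightarrow\ell^pG$ then finishes exactly as in Proposition~\ref{prop:caseZ}. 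I expect $\dim_{\ell^p}F=0$ to be precisely what makes room for this: one should first show it implies $F\cap\ell^2G=0$ (as happens for Saeki's examples, where square-summability together with spectrum in a $\lambda$-null set forces the vector to vanish), and then that this $\ell^2$-thinness lets one choose the $S_i$ with $\operatorname{Fix}(S_i^{(2)})$ exhausting $\ell^2G$ while $S_i$ still kills $F$ inside $\ell^pG$. As a preliminary reduction I would also attempt to pass to $F$ cyclic, writing a general $F$ as the closure of an increasing union of singly generated submodules (each still of $\ell^p$-dimension $0$) and diagonally combining the resulting exhaustions.

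The main obstacle is the step with no abelian-harmonic-analysis substitute: for non-abelian $G$ there is no Fourier transform, hence no ready supply of the ``smooth bump'' multipliers $S_i$, and one must build the idempotent-like chain directly from the spectral data of $F$ inside the von Neumann algebra $RG$ --- and it is not even clear that $\dim_{\ell^p}F=0$ supplies enough structure, since the implication $\dim_{\ell^p}F=0\Rightarrow F\cap\ell^2G=0$ is itself unproven and presumably needs a separate argument comparing Gournay's dimension to the Murray--von Neumann dimension of $\ell^2$-closures. The proposed reduction to cyclic $F$ is also not automatic: thin exhaustions for an increasing family of submodules do not obviously amalgamate into one for their closed union, so one may be forced to carry out the multiplier construction for a general $F$ in one stroke. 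For the sofic variant mentioned in the statement one must moreover replace Følner sets and $RG$ throughout by sofic approximations and the sofic $\ell^p$-dimension of~\cite{Ben11}.
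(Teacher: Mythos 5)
This statement is labelled a \emph{conjecture} in the paper, and the paper offers no proof of it: the authors only assert (in Remark~(iii), without argument) that the existence of a thin exhaustion realizing $F$ is a \emph{necessary} condition for $\dim_{\ell^p}F=0$, and explicitly leave the converse open. So there is no proof in the paper to compare yours against, and your text, by your own account, is a research plan rather than a proof. Concretely: in the ``only if'' direction you invoke additivity of Gournay's dimension on $\ell^p$-direct sums, but what you actually need is that $E_i\cap F=0$ forces $\dim_{\ell^p}E_i+\dim_{\ell^p}F\le\dim_{\ell^p}\overline{E_i+F}$; for $p\ne 2$ the algebraic sum $E_i+F$ need not be closed and the map $E_i\oplus F\to\ell^pG$ need not have bounded inverse, so this is not a formal consequence of additivity for direct-sum decompositions, and your fallback (``local rank on F{\o}lner sets'') is not carried out. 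You also need $\dim_{\ell^p}E_i\to 1$ for an arbitrary invariant exhaustion, i.e.\ continuity from below of Gournay's dimension, which you assume rather than verify.

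The ``if'' direction is where the real content of the conjecture lies, and there your proposal is openly incomplete. The construction of the idempotent-like chain $(S_i)$ in Propositions~\ref{prop:caseZ} and~\ref{prop:caseTor} rests entirely on the Fourier transform identifying $\ell^pG$-multipliers with functions on the dual group and on the hypothesis that $\widehat\mu$ has Lebesgue-null spectral support --- a hypothesis much stronger than, and not known to follow from, $\dim_{\ell^p}F=0$. You acknowledge that the key intermediate step ($\dim_{\ell^p}F=0\Rightarrow F\cap\ell^2G=0$) is unproven, that the non-abelian case has no substitute for the bump multipliers, and that the reduction to cyclic $F$ does not obviously amalgamate. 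Each of these is a genuine missing idea, not a routine verification, so the proposal does not establish the statement in either direction. This is consistent with the paper's own treatment of it as an open problem.
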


This conjecture is motivated by a result of Sauer~\cite[Theorem 2.4]{Sa05} for L{\"u}ck's extended dimension function.

\item For $G=\mbold{Z}^n, n\geq 2$ and $p>\frac{2n}{n-1}$ the situation in Theorem~\ref{thm:main} is particularly nice. In this case one can take the subspace $F$ to be the kernel of an operator $T \in \mbold{C}\mbold{Z}^n$ which acts without kernel on $\ell^2\mbold{Z}^n$, and it was noticed already by Gournay~\cite[Remark 9.4]{Go12} that in this case the kernel in $\ell^p\mbold{Z}^n$ must have $\ell^p$-dimension zero, in the sense of~\cite{Go12}.

For a proof that there exist a $T$ with these properties see~\cite{Pu98}. The proof that we can take the kernel of $T$ as our $F$ is analogous to the proof of Proposition~\ref{prop:caseZ}.

\item Using a Euler characteristic argument, Gaboriau had previously observed that for certain non-amenable groups $G$ one cannot hope to have a notion of $\ell^p$-dimension for which $(\ell^p G)^{\oplus n}$ has dimension $n$, and which also satisfies additivity for short exact sequences (see the introduction of~\cite{Go10}). The fact that we produce thin exhaustions for \emph{amenable} groups should further increase the doubts that there be any reasonable $\ell^p$-dimension for large $p$.

\end{enumerate}

\section*{Acknowledgements}
\vspace{-2mm}
Question~\cite[1.1]{Go12} was suggested to us by Kate Juschenko. We also thank Antoine Gournay and Antoine Derighetti for some nice discussions, and Damien Gaboriau for reading a preliminary version of this note.

\vspace{-4mm}

\end{document}